\newtheorem{theorem}{Theorem}[section]
\newtheorem{corollary}[theorem]{Corollary}
\theoremstyle{definition}
\newtheorem{definition}[theorem]{Definition}
\numberwithin{equation}{section}
\begin{document}

\baselineskip=15pt

\title[Holomorphic $\text{GL}_2({\mathbb C})$-geometry on compact complex 
manifolds]{Holomorphic $\text{GL}_2({\mathbb C})$-geometry on compact complex manifolds }

\author[I. Biswas]{Indranil Biswas}

\address{School of Mathematics, Tata Institute of Fundamental
Research, Homi Bhabha Road, Mumbai 400005, India}

\email{indranil@math.tifr.res.in}

\author[S. Dumitrescu]{Sorin Dumitrescu}

\address{Universit\'e C\^ote d'Azur, CNRS, LJAD, France}

\email{dumitres@unice.fr}

\subjclass[2020]{53C07, 53C10, 32Q57}

\keywords{${\rm GL}(2)$-structure, holomorphic conformal structure, twisted holomorphic 
symplectic form, K\"ahler manifold, Fujiki class $\mathcal C$ manifold}

\date{}
\begin{abstract}
We study holomorphic $\text{GL}_2({\mathbb C})$ and $\text{SL}_2({\mathbb C})$ geometries on 
compact complex manifolds.
\end{abstract}

\maketitle

\tableofcontents

\section{Introduction}

A holomorphic $\text{GL}_2({\mathbb C})$ geometric structure on a complex manifold $X$ of 
complex dimension $n$ is a holomorphic point-wise identification between the holomorphic 
tangent space $TX$ and homogeneous polynomials in two variables of degree $(n-1)$. More 
precisely, a $\text{GL}_2({\mathbb C})$ geometric structure on $X$ is a pair $(E,\, 
\varphi)$, where $E$ is a rank two holomorphic vector bundle on $X$ and $\varphi$ is a 
holomorphic vector bundle isomorphism of $TX$ with the $(n-1)$--fold symmetric product 
$S^{n-1}(E)$ (see Definition \ref{def}). If $E$ has trivial determinant (i.e., the 
holomorphic line bundle $\bigwedge^2E$ is trivial), then $(E,\, \varphi)$ is called a 
holomorphic $\text{SL}_2({\mathbb C})$ geometric structure.

The above definitions are the holomorphic analogues of the concepts of 
$\text{GL}_2({\mathbb R})$ and $\text{SL}_2({\mathbb R})$ geometries in the real smooth category (for the study 
of those geometries in the real smooth category we refer
the reader to \cite{DG, FK, Kr} and references therein).

This article deals with the classification of compact complex manifolds admitting holomorphic 
$\text{GL}_2({\mathbb C})$ and $\text{SL}_2({\mathbb C})$ geometries.

Holomorphic $\text{GL}_2({\mathbb C})$ and $\text{SL}_2({\mathbb C})$ geometries on $X$ are examples 
of holomorphic $G$--structures (see \cite{Kob} and Section \ref{contexte}). They correspond to the reduction of 
the structural group of the $\text{GL}_n({\mathbb C})$--frame bundle of $X$ to $\text{GL}_2({\mathbb C})$ and 
$\text{SL}_2({\mathbb C})$ respectively.

When the dimension $n$ of $X$ is odd, then a $\text{GL}_2({\mathbb C})$ geometry produces
a {\it holomorphic conformal structure} on $X$ (see, for example, \cite{DG}, Proposition 3.2 or Section \ref{contexte} here). Recall 
that a holomorphic conformal structure is defined by a holomorphic line bundle $L$ over $X$ and a holomorphic 
section of $S^2(T^*X) \otimes L$, which is a $L$-valued fiberwise nondegenerate holomorphic quadratic form on $TX$. 
Moreover, if $n\,=\, 3$, then a $\text{GL}_2({\mathbb C})$-geometry on $X$ is exactly a 
holomorphic conformal structure. The standard flat example is the smooth quadric $Q_3$ in ${\mathbb C}{\mathbb
P}^4$ defined by the equation $Z_0^2+Z_1^2+Z^2_2 + Z^2_3+ Z_4^2\,=\,0$.

A $\text{SL}_2({\mathbb C})$-geometry on $X$ defines a {\it holomorphic Riemannian 
metric} (i.e. a holomorphic section of $S^2(T^*X)$ which is point-wise nondegenerate \cite{Du})
when $n$ is odd. When $n\,=\,3$, a $\text{SL}_2({\mathbb C})$-geometry is the same data
as a holomorphic Riemannian metric.

When the dimension $n$ of $X$ is even, 
a holomorphic $\text{SL}_2({\mathbb C})$--geometry on 
$X$ produces a nondegenerate holomorphic two form $\omega$ on $X$. Moreover, if $X$ is a compact 
K\"ahler manifold, then $\omega$ is automatically closed, and hence $\omega$ is a
{\it holomorphic symplectic form}, and therefore $X$ is a hyper-K\"ahler manifold (see \cite{Be}).

When $n$ is even, a $\text{GL}_2({\mathbb C})$--geometry on $X$ produces a
twisted nondegenerate two form $\omega$ on $X$, which means that there is a holomorphic line bundle $L$ over 
$X$ such that $\omega$ is a fiberwise nondegenerate holomorphic global section of $\Omega^2_X \otimes L$.
We use the terminology of \cite{Is1, Is2} and call $\omega$ a {\it twisted holomorphic symplectic form}.

Let us describe the results of this article. In Section \ref{Kahler and C} we prove Theorem 
\ref{kahler even} asserting that {\it a compact K\"ahler manifold of even complex dimension 
$n \,\geq\, 4$ admitting a holomorphic $\text{GL}_2({\mathbb C})$--geometry is covered by a 
compact torus}. A key ingredient of the proof is a result of Istrati \cite{Is1, Is2} proving 
that K\"ahler manifolds bearing a twisted holomorphic symplectic form have vanishing first 
Chern class. By Yau's proof of Calabi's conjecture \cite{Ya} these manifolds admit a Ricci 
flat metric and the canonical line bundle $K_X$ is trivial (up to a finite cover) \cite{Bo1, 
Be}. Hence the $\text{GL}_2({\mathbb C})$--geometry is induced by an underlying 
$\text{SL}_2({\mathbb C})$--geometry. The proof of Theorem \ref{kahler even} involves 
showing that {\it any compact K\"ahler manifold of complex dimension $n \geq 3$ bearing a 
${\rm SL}_2({\mathbb C})$--geometry is covered by a compact complex torus}. Recall that 
compact K\"ahler manifolds of odd complex dimension bearing a $\text{SL}_2({\mathbb 
C})$--geometry also admit a holomorphic Riemannian metric and hence have  the associated 
holomorphic (Levi-Civita) affine connections; such manifolds are known to have vanishing 
Chern classes, \cite{At}, and hence they admit a  covering by some compact complex torus 
\cite{IKO}.

Theorem \ref{Fujiki} deals with manifolds $X$ in Fujiki class $\mathcal C$ (i.e. 
holomorphic images of compact K\"ahler manifolds \cite{Fu}) bearing a $\text{GL}_2({\mathbb 
C})$--geometry. Under the technical assumption that there exists a cohomology class $\lbrack 
\alpha \rbrack\,\in\, H^{1,1}(X,\, \mathbb R)$ which is numerically effective (nef) and has 
positive self-intersection (meaning $\int_X \alpha^{2m} \,>\, 0$, where 
$2m\,=\,\dim_{\mathbb C} X$), we prove that {\it there exists a non-empty Zariski open 
subset $\Omega\,\subset\, X$ admitting a flat K\"ahler metric.} We recall that simply 
connected non-K\"ahler manifolds in Fujiki class $\mathcal C$ admitting a holomorphic 
symplectic form were constructed in \cite[Example 21.7]{Huy} (see also  \cite{Bo2,Gu1,Gu2} for  other  constructions  of  simply connected   non-K\"ahler holomorphic  symplectic manifolds).

In Section \ref{KE and Fano} we obtain the classification of compact K\"ahler-Einstein 
manifolds (Theorem \ref{KE}) and of Fano manifolds (Theorem \ref{Fano}) bearing a 
$\text{GL}_2({\mathbb C})$--geometry. Theorem \ref{KE} states that {\it any compact 
K\"ahler--Einstein manifold, of complex dimension at least three, bearing a ${\rm 
GL}_2({\mathbb C})$--geometry is
\begin{itemize}
\item either covered by a torus,

\item or  biholomorphic to the three dimensional  quadric $Q_3$,

\item or covered by the three dimensional Lie ball $D_3$ (the noncompact dual of $Q_3$, as
Hermitian symmetric space).
\end{itemize}
In all these three situations the ${\rm GL}_2({\mathbb C})$--geometry is the (flat) standard one.}

Theorem \ref{Fano} asserts that {\it a Fano manifold bearing a holomorphic ${\rm GL}_2({\mathbb C})$--geometry 
is isomorphic to the quadric $Q_3$ endowed with its standard ${\rm GL}_2({\mathbb C})$--structure.}

Theorem \ref{KE} and Theorem \ref{Fano} belong to the same circle of ideas as the following known results. 
Kobayashi and Ochiai proved in \cite{KO3} that compact K\"ahler--Einstein manifolds bearing a holomorphic 
conformal structure are the standard ones: quotients of tori, the smooth $n$-dimensional quadric $Q_n$ and the 
quotients of the non compact dual $D_n$ of $Q_n$. Moreover the same authors proved in \cite{KO2} that 
holomorphic $G$--structures, modeled on an irreducible Hermitian symmetric space of rank $\geq\, 2$ (in particular, 
a holomorphic conformal structure), on compact K\"ahler-Einstein manifolds are always flat. Let us also mention the main 
result in \cite{HM} which says that holomorphic irreducible reductive $G$--structures on uniruled projective 
manifolds are always flat. Consequently, a uniruled projective manifold bearing a holomorphic conformal 
structure is biholomorphic to the quadric $Q_n$ (see also \cite{Ye}). The following generalization was proved in 
\cite{BM}: all holomorphic Cartan geometries (see \cite{Sh}) on manifolds admitting a rational curve are flat.
 
Let us clarify that the classification obtained in Theorem \ref{KE} and Theorem \ref{Fano} does not use results 
coming from \cite{BM,HM, KO2,KO3, Ye}: the methods are specific to the case of $\text{GL}_2({\mathbb 
C})$--geometry and unify the twisted holomorphic symplectic case (even dimension) and the holomorphic conformal 
case (odd dimension).

The last section discusses  some related open problems. Let us emphasize one of those questions dealing with 
$\text{SL}_2({\mathbb C})$--geometries on non-K\"ahler manifolds.

We recall that Ghys constructed in \cite{Gh} exotic deformations of quotients of $\text{SL}_2({\mathbb C})$ by 
normal lattices. Those Ghys manifolds are non-K\"ahler and non-parallelizable, but they admit a non-flat 
holomorphic $\text{SL}_2({\mathbb C})$--geometric structure (i.e. a holomorphic Riemannian metric). 
Nevertheless, all Ghys holomorphic Riemannian manifolds are locally homogeneous. Moreover, it was proved in 
\cite{Du} that holomorphic Riemannian metrics on compact complex threefolds are always locally homogeneous.

For higher complex odd dimensions we conjecture that $\text{SL}_2({\mathbb C})$--geometries on compact complex 
manifolds are always locally homogeneous. Some evidence for it was recently provided by the main 
result in \cite{BD} showing that compact complex simply connected manifolds do not admit any holomorphic Riemannian 
metric (in particular, in odd dimension, they do not admit a holomorphic $\text{SL}_2({\mathbb C})$--geometry).

\section{Holomorphic $\text{GL}_2({\mathbb C})$ and $\text{SL}_2({\mathbb C})$ 
geometries}\label{contexte}

In this section we introduce the framework of $\text{GL}_2({\mathbb C})$ and 
$\text{SL}_2({\mathbb C})$ geometries on complex manifolds and describe the geometry of the 
quadric (model of the flat holomorphic conformal geometry) 
and that of its noncompact dual. We focus on the complex dimension three, the only case admitting a 
$\text{GL}_2({\mathbb C})$-geometry.

The holomorphic tangent bundle of a complex manifold $Z$ will be denoted by $TZ$. The frame bundle
for $TZ$ will be denoted by $R(Z)$. We recall that $R(Z)$ consists of all linear isomorphisms from
${\mathbb C}^d$ to the fibers of $TZ$, where $d\,=\, \dim_{\mathbb C} Z$. The $R(Z)$ is a holomorphic
principal $\text{GL}(d,{\mathbb C})$--bundle over $Z$.

\begin{definition}\label{def}
A holomorphic $\text{GL}_2({\mathbb C})$-{\it structure} (or 
$\text{GL}_2({\mathbb C})$-{\it geometry}) on a complex manifold $X$ of complex dimension $n\,
\geq\, 2$ is a holomorphic bundle isomorphism $TX \,\simeq\, S^{n-1} (E)$, where
$S^{n-1}(E)$ is the $(n-1)$-th symmetric power of a rank two 
holomorphic vector bundle $E$ over $X$. If $E$ has trivial determinant, meaning the line
bundle $\bigwedge^2 E$ is 
holomorphically trivial, it is called a holomorphic $\text{SL}_2({\mathbb C})$-{\it 
structure} (or $\text{SL}_2({\mathbb C})$-{\it geometry}) on $X$.
\end{definition}

Holomorphic $\text{GL}_2({\mathbb C})$-structures and $\text{SL}_2({\mathbb C})$-structures are 
particular cases of holomorphic irreducible reductive $G$-structures \cite{Kob, HM}. They 
correspond to the holomorphic reduction of the structure group of the frame bundle $R(X)$ of $X$ from 
$\text{GL}_n({\mathbb C})$ to $\text{GL}_2({\mathbb C})$ and $\text{SL}_2({\mathbb C})$ 
respectively. It should be clarified that for a $\text{GL}_2({\mathbb C})$-structure, the 
corresponding group homomorphism $\text{GL}_2({\mathbb C}) \,\longrightarrow\, 
\text{GL}_n({\mathbb C})$ is given by the $(n-1)$-th symmetric product of the standard
representation of $\text{GL}_2({\mathbb C})$. This 
$n$-dimensional irreducible linear representation of $\text{GL}_2({\mathbb C})$ is also given by the 
induced action on the homogeneous polynomials of degree $(n-1)$ in two variables. For an
$\text{SL}_2({\mathbb C})$-geometry, the corresponding homomorphism $\text{SL}_2({\mathbb C}) 
\,\longrightarrow\, \text{GL}_n({\mathbb C})$ is the restriction of the above homomorphism to
$\text{SL}_2({\mathbb C})\, \subset\,\text{GL}_2({\mathbb C})$.

The standard symplectic form on ${\mathbb C}^2$ produces a nondegenerate quadratic form (respectively, 
nondegenerate alternating form) on the symmetric product $S^{2i}({\mathbb C}^2)$ (respectively, 
$S^{2i-1}({\mathbb C}^2)$) for all $i\, \geq\, 1$. Consequently, the above linear representation 
$\text{SL}_2({\mathbb C})\,\longrightarrow\, \text{GL}_n({\mathbb C})$ preserves a nondegenerate complex 
quadratic form on ${\mathbb C}^n$ if $n$ is odd and a nondegenerate two (alternating) form on ${\mathbb C}^n$ if 
$n$ is even (see, for example, Proposition 3.2 and Sections 2 and 3 in \cite{DG} or \cite{Kr}). The above 
linear representation $\text{GL}_2({\mathbb C})\,\longrightarrow\, \text{GL}_n({\mathbb C})$ preserves the line 
in ${(\mathbb C}^n)^*\otimes {(\mathbb C}^n)^*$
spanned by the above tensor; the action of $\text{GL}_2({\mathbb C})$ on this line is nontrivial.
Therefore, the $\text{GL}_2({\mathbb C})$-geometry (respectively, 
$\text{SL}_2({\mathbb C})$-geometry) on $X$ induces a {\it holomorphic conformal structure} (respectively, {\it 
holomorphic Riemannian metric}) if the complex dimension of $X$ is odd, and it
induces a {\it twisted holomorphic 
symplectic structure} (respectively, a {\it holomorphic nondegenerate two form})
when the complex dimension of $X$ is even.
 
A holomorphic $\text{SL}_2({\mathbb C})$-structure on a complex surface $X$ is a holomorphic 
trivialization of the canonical bundle $K_X\,=\, \bigwedge^2 (TX)^*$.
 
The simplest nontrivial examples of $\text{GL}_2({\mathbb C})$ and $\text{SL}_2({\mathbb C})$
structures are provided by 
the complex threefolds. In this case a $\text{GL}_2({\mathbb C})$-structure is a holomorphic conformal structure 
on $X$, and an $\text{SL}_2({\mathbb C})$-structure is a holomorphic Riemannian metric on $X$. Indeed, this is 
deduced from the fact that the $\text{SL}_2({\mathbb C})$-representation on the three-dimensional vector space 
of homogeneous quadratic polynomials in two variables
\begin{equation}\label{hp}
\{aX^2+bXY +c Y^2\, \mid\, a,\, b,\, c\, \in\, \mathbb C\}
\end{equation}
preserves the discriminant 
$\Delta\,=\,b^2-4ac$. Consequently, a holomorphic isomorphism between $\text{PSL}_2({\mathbb C})$ and the 
complex orthogonal group $\text{SO}(3, \mathbb C)$ is obtained. In other words, the discriminant, being 
nondegenerate, induces a holomorphic Riemannian metric on the threefold $X$. Notice that a holomorphic Riemannian metric  coincides with a reduction
of the structural group of the frame bundle $R(X)$   to the orthogonal group $\text{O}(3, \mathbb C)$. On a double unramified cover of  a holomorphic Riemannian threefold  there is a reduction of the structural group of the frame bundle 
to the connected component of the identity $\text{SO}(3, \mathbb C)$. Hence in  complex dimension three a  $\text{SL}_2({\mathbb C})$-geometry is the same data as a holomorphic Riemannian metric, while  a reduction of the frame bundle to
$\text{PSL}_2({\mathbb C})$ is the same data as a holomorphic Riemannian metric with an orientation.

Moreover, the $\text{GL}_2({\mathbb C})$-representation on the vector space in \eqref{hp} 
preserves the line generated by the discriminant $\Delta\,=\,b^2-4ac$. This gives an isomorphism 
between $\text{GL}_2({\mathbb C})/({\mathbb Z}/2{\mathbb Z})$ and the conformal group $\text{CO}(3, \mathbb C) 
\,=\,(\text{O}(3, \mathbb C) \times {\mathbb C}^* )/({\mathbb Z}/2{\mathbb Z}) \,=\, \text{SO}(3, \mathbb C) \times {\mathbb C}^*$. Therefore, 
the $\text{GL}_2({\mathbb C})$--structure coincides with a holomorphic reduction of the structure 
group of the frame bundle $R(X)$ to $\text{CO}(3, \mathbb C)$. This holomorphic reduction of the 
structure group defines a holomorphic conformal structure on $X$. Notice that $\text{CO}(3, \mathbb C)$ being connected, the two different orientations
of a three dimensional  holomorphic Riemannian manifold are conformally equivalent.
 
Recall that flat conformal structures in complex dimension $n \geq 3$ are locally modeled on the 
quadric
$$
Q_n\, :=\, \{[Z_0 : Z_1 : \cdots : Z_{n+1}] \, \mid\, Z_0^2+Z_1^2+ \ldots +Z_{n+1}^2\,=\,0\}
\, \subset\, {\mathbb C}{\mathbb P}^{n+1}\, .
$$
The holomorphic automorphism group of $Q_n$ is $\text{PSO}(n+2, \mathbb C)$.
 
Let us mention that $Q_n$ is identified with the real Grassmannian of oriented $2$--planes in
${\mathbb R}^{n+2}$. From this it
follows that $Q_n\,=\, {\rm SO}(n+2, \mathbb R)/({\rm SO}(2, \mathbb R)\times{\rm SO}(n, \mathbb R))$
(see, for instance, Section 1 in \cite{JR2}). The action of ${\rm SO}(n+2, \mathbb R)$ on $Q_n$ is
via holomorphic automorphisms. To see this, first note that the 
real tangent space at a point of $Q_n\,=\, {\rm SO}(n+2, \mathbb R)/({\rm SO}(2, \mathbb R)\times
{\rm SO}(n, \mathbb R))$ is identified 
with the corresponding quotient of the real Lie algebras ${\rm so}(n+2, \mathbb R)/({\rm so}(2, \mathbb R)
\oplus{\rm so}(n, \mathbb R))$, and the stabilizer ${\rm SO}(2, \mathbb R) \times {\rm SO}(n, \mathbb R)$
acts on this quotient vector space through the adjoint 
representation.
The complex structure of the tangent space (induced by the complex structure of $Q_n$) is given 
by the operator $J$ which satisfies the condition that $\{\exp(tJ)\}_{t\in\mathbb R}$
is the adjoint action of the factor ${\rm SO}(2, \mathbb R)$. Since ${\rm SO}(2, \mathbb R)$
lies in the center of the stabilizer, the almost complex structure $J$ is preserved by the action of
the stabilizer. Consequently, the complex structure is ${\rm SO}(n+2, \mathbb R)$--invariant.
 
The quadric $Q_n$ is a Fano manifold, and it is an irreducible Hermitian symmetric space of type \textbf{BD I}
\cite[p.~312]{Bes}. For more about its geometry and that of its noncompact dual 
$D_n$ (as a Hermitian symmetric space) the reader is referred to \cite[Section 1]{JR2}.
 
Theorem \ref{Fano} implies that among the quadrics $Q_n$, $n \,\geq\, 3$,
only $Q_3$ admits a holomorphic $\text{GL}_2({\mathbb C})$--structure. Moreover, Theorem 
\ref{KE} states that the only compact non-flat K\"ahler-Einstein manifolds bearing a holomorphic 
$\text{GL}_2({\mathbb C})$-structure are $Q_3$ and those covered by its noncompact dual $D_3$.

Recall that a general result of Borel on Hermitian irreducible symmetric spaces shows 
that the non compact dual is always realized as an open subset of its compact dual.
 
We will give below a geometric description of the noncompact dual $D_3$ of $Q_3$ as an open 
subset in $Q_3$ which seems to be less known.
 
Consider the complex quadric form $q_{3,2}\,:=\,Z_0^2 +Z_1^2+Z_2^2-Z_3^2-Z_4^2$ of five variables,
and let $$Q\, \, \subset\, {\mathbb C}{\mathbb P}^4$$
be the quadric $Q$ defined 
by the equation $q_{3,2}\,=\,0$. Then $Q$ is biholomorphic to $Q_3$.
Let ${\rm O}(3,2)\,\subset\, \text{GL}(5,{\mathbb R})$ be the real orthogonal group of 
$q_{3,2}$, and denote by ${\rm SO}_0(3,2)$ the connected component
of ${\rm O}(3,2)$ containing the identity element. The quadric $Q$ admits a 
natural holomorphic action of the real Lie group ${\rm SO}_0(3,2)$, which is not transitive,
in contrast to the action of ${\rm SO}(5, \mathbb R)$ on $Q_3$. The orbits of the
$SO_0(3,2)$--action on $Q$ 
coincide with the connected components of the complement $Q  \setminus S$, where $S$ is
the real hypersurface in ${\mathbb 
C}{\mathbb P}^4$  defined by the equation
$$\mid Z_0\mid^2 + \mid Z_1\mid^2 + \mid Z_2\mid^2 - \mid Z_3\mid^2 - 
\mid Z_4\mid^2\,=\,0\, .$$
 
Notice that the above real hypersurface $S$ contains all real points of $Q$. In fact, it can be 
shown that $S \bigcap Q$ coincides with the set of point $m \,\in\, Q$ such that the complex 
line $(m,\, \overline{m})$ is isotropic (i.e. it also lies in $Q$). Indeed, since 
$q_{3,2}(m)\,=\,0$, then the line generated by $(m, \,\overline{m})$ lies in $Q$ if and only if 
$m$ and $\overline{m}$ are perpendicular with respect to the bilinear symmetric form associated 
to $q_{3,2}$, or equivalently $m \,\in\, S$.

For any point $m \,\in\, Q \setminus S$, the form $q_{3,2}$ is nondegenerate on the line $(m,\, 
\overline{m})$. To see this first notice that the complex line generated by $(m,\, 
\overline{m})$, being real, may be considered as a plane in the real projective space ${\mathbb 
R}{\mathbb P}^4$. The restriction of the (real) quadratic form $q_{3,2}$ to this real plane 
$(m,\, \overline{m})$ vanishes at the points $m$ and $\overline{m}$ which are distinct (because 
all real points of $Q$ lie in $S$). It follows that the quadratic form cannot have signature 
$(0,\,1)$ or $(1,\,0)$ when restricted to the real plane $(m,\, \overline{m})$. Consequently, 
the signature of the restriction of $q_{3,2}$ to this plane is either $(2,\,0)$ or $(1,\,1)$ or 
$(0,\,2)$. Each of these three signature types corresponds to an ${\rm SO}_0(3,2)$ orbit in $Q$.
 
Take the point $m_0\,=\, [0:0:0:1:i] \,\in\, Q$. The noncompact dual $D_3$ of $Q_3$ is the 
${\rm SO_0}(3,2)$--orbit of $m_0$ in $Q$. It is an open subset of $Q$ biholomorphic to a 
bounded domain in ${\mathbb C}^3$; it is the three dimension Lie ball (the bounded domain 
$IV_3$ in Cartan's classification).

The signature of $q_{3,2}$ on the above line $(m_0,\,\overline{m}_0)$ is $(0,\,2)$. The 
signature of $q_{3,2}$ on the orthogonal part of $(m_0,\, \overline{m}_0)$, canonically 
isomorphic to $T_{m_0}Q$, is $(3,\,0)$. Then the ${\rm SO}_0(3,2)$--orbit of $m_0$ in $Q$ inherits 
an ${\rm SO}_0(3,2)$--invariant Riemannian metric. The stabilizer of $m_0$ is ${\rm SO}(2, 
\mathbb R) \times{\rm SO}(3, \mathbb R)$. Here ${\rm SO}(2, \mathbb R)$ acts on ${\mathbb 
C}^3$ through the one parameter group $\text{exp}(t J)$, with $J$ being the complex 
structure, while the ${\rm SO}(3, \mathbb R)$ action on ${\mathbb C}^3$ is given by the 
complexification of the canonical action of ${\rm SO}(3, \mathbb R)$ on ${\mathbb R}^3$. 
Hence we conclude that the action of ${\rm U}(2)\,= \,{\rm SO}(2,{\mathbb R})\times{\rm SO}(3, 
{\mathbb R})$ is the unique irreducible action on the symmetric product $S^2({\mathbb 
C}^2)\,=\, {\mathbb C}^3$. This action coincides with the holonomy representation of this 
Hermitian symmetric space $D_3$; as mentioned before, $D_3$ is the noncompact dual of $Q_3$. 
The holonomy representation for $Q_3$ is the same.

Recall that the automorphism group of the noncompact dual $D_3$ is ${\rm PSO}_0(3,2)$; it is 
the subgroup of the automorphism group of $Q$ that preserves $D_3$ (which lies in $Q$ as 
the ${\rm SO}_0(3,2)$--orbit of $m_0 \,\in\, Q$). From this it follows that any quotient of $D_3$ by a 
lattice in ${\rm PSO}_0(3,2)$ admits a flat holomorphic conformal structure induced by that 
of the quadric $Q$. 

Notice that compact projective threefolds admitting holomorphic conformal structures 
($\text{GL}_2({\mathbb C})$-structures) were classified in \cite{JR1}. There are in fact 
only the standard examples: finite quotients of three dimensional abelian varieties, the 
smooth quadric $Q_3$ and quotients of its non compact dual $D_3$. In \cite{JR2}, the same 
authors classified also the higher dimensional compact projective manifolds admitting a flat 
holomorphic conformal structure; they showed that the only examples are the standard ones.

\section{$\text{GL}_2({\mathbb C})$-structures on K\"ahler and Fujiki class $\mathcal C$ 
manifolds}\label{Kahler and C}

Every compact complex surface of course admits a holomorphic $\text{GL}_2({\mathbb 
C})$-structure by taking $E$ in Definition \ref{def} to be the
tangent bundle itself. The situation is much more stringent in higher dimensions. The following 
result shows that a compact K\"ahler manifold of even dimension $n\,\geq\, 4$ bearing a 
holomorphic $\text{GL}_2({\mathbb C})$--structure has trivial holomorphic tangent bundle (up 
to finite \'etale cover).

\begin{theorem}\label{kahler even}
Let $X$ be a compact K\"ahler manifold of even complex dimension $n\,\geq\, 4$ admitting a 
holomorphic ${\rm GL}_2({\mathbb C})$--structure. Then $X$ admits a finite unramified 
covering by a compact complex torus.
\end{theorem}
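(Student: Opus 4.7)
The plan is to run the reduction sketched in the introduction via the intermediate goal of showing that a compact K\"ahler manifold of complex dimension $n\geq 3$ carrying an $\mathrm{SL}_2(\mathbb{C})$-geometry is finitely covered by a torus. Since $n$ is even, the $\mathrm{SL}_2(\mathbb{C})$-invariant alternating form on $S^{n-1}\mathbb{C}^2$ (recorded in Section \ref{contexte}) produces on $X$ a twisted holomorphic symplectic form: a fiberwise nondegenerate global section of $\Omega_X^2\otimes L$, for an appropriate power $L$ of $\det E^\ast$. Istrati's theorem then forces $c_1(X)=0$ in $H^2(X,\mathbb{R})$; Yau's solution of the Calabi conjecture produces a Ricci-flat K\"ahler metric, and the Beauville-Bogomolov decomposition supplies a finite \'etale cover $\tilde X\to X$ with $K_{\tilde X}$ holomorphically trivial. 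Since $\det T\tilde X\cong (\det E)^{n(n-1)/2}$, triviality of $K_{\tilde X}$ forces the pullback of $\det E$ to be torsion, and a further finite \'etale cover trivializing this torsion class produces a manifold $X'$ on which the $\mathrm{GL}_2$-structure refines to an $\mathrm{SL}_2$-structure $TX'\cong S^{n-1}(E)$ with $\det E\cong\mathcal{O}_{X'}$.

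It then remains to prove the claim that every compact K\"ahler manifold $X'$ of complex dimension $n\geq 3$ bearing an $\mathrm{SL}_2(\mathbb{C})$-geometry is finitely covered by a torus. The odd case is already recorded in the introduction: the $\mathrm{SL}_2$-invariant quadratic form on $S^{n-1}\mathbb{C}^2$ gives a holomorphic Riemannian metric, whose holomorphic Levi-Civita connection is a holomorphic affine connection on $TX'$, Atiyah's theorem forces the real Chern classes of $X'$ to vanish, and IKO yields a torus cover. For even $n\geq 4$, the $\mathrm{SL}_2$-structure still produces a genuine holomorphic symplectic form $\omega$ (closed because $X'$ is K\"ahler), so with $K_{X'}$ trivial the Beauville-Bogomolov decomposition writes a finite \'etale cover of $X'$ as $T\times\prod_j Z_j$, with $T$ a complex torus and each $Z_j$ irreducible hyper-K\"ahler (strict Calabi-Yau factors of complex dimension $\geq 3$ are ruled out because they satisfy $h^{0,2}=0$ while $\omega$ must restrict to each factor nondegenerately).

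The main obstacle is ruling out the hyper-K\"ahler factors $Z_j$. My approach would exploit the rigid Chern class structure forced by $TX'\cong S^{n-1}(E)$ with $\det E$ trivial. Writing formal Chern roots of $E$ as $\pm a$ with $a^2=-c_2(E)$, the Chern roots of $TX'$ are $(n-1-2k)a$ for $k=0,\ldots,n-1$; hence every odd Chern class of $X'$ vanishes, and $c(TX')=\prod_{j\text{ odd},\,1\le j\le n-1}\bigl(1+j^2\,c_2(E)\bigr)$, so each $c_{2k}(TX')$ is a prescribed integer multiple of $c_2(E)^k$. Consequently every polynomial Chern number of $X'$ is a fixed rational multiple of $\int_{X'}c_2(E)^{n/2}$. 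Comparing these rigid identities with the Fujiki-type relations satisfied by Chern numbers on any irreducible hyper-K\"ahler factor $Z_j$ (which tie them to the Beauville-Bogomolov quadratic form on $H^2(Z_j,\mathbb{R})$) should produce an incompatibility forcing $c_2(E)=0$. Once every Chern class of $TX'$ vanishes, the IKO theorem gives a torus cover of $X'$, completing the proof.
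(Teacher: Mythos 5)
Your first half matches the paper: the twisted symplectic form with values in $L=(\bigwedge^2E)^{\otimes(n-1)}$, Istrati's theorem giving $c_1(X)=c_1(L)=0$, Yau and Beauville--Bogomolov, and the passage to a finite \'etale cover on which $\det E$ becomes trivial so that the ${\rm GL}_2$-structure refines to an ${\rm SL}_2$-structure. Your exclusion of strict Calabi--Yau factors via $h^{0,2}=h^{0,1}=0$ is also fine. The genuine gap is the step you flag as the ``main obstacle'': ruling out the hyper-K\"ahler factors, and in particular the mixed case $\widehat{X}=T_{n-2}\times(\text{K3})$, which is exactly the case the paper has to work hardest to exclude. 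Your proposed mechanism --- every Chern number of $X'$ is a fixed rational multiple of $\int_{X'}c_2(E)^{n/2}$, to be played off against Fujiki-type relations --- does not produce a contradiction there: on $T_{n-2}\times(\text{K3})$ all Chern classes of the tangent bundle are pulled back from the K3 surface, so every Chern number of total degree $2n\geq 8$ vanishes, your identities are satisfied with both sides equal to zero, and $c_2(E)$ is forced only to be a (nonzero!) rational multiple of the pullback of $c_2(\text{K3})$, not to vanish. So the strategy as stated cannot conclude $c_2(E)=0$. (A secondary issue: even granting vanishing of all Chern classes, citing [IKO] requires a holomorphic affine connection, which you do not have in even dimension; the correct route would be $c_1=0$ plus $\int c_2\wedge\omega^{n-2}=0$ forcing the Ricci-flat metric to be flat, then Bieberbach.)

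The paper's actual mechanism for this step is of a different nature and is the idea you are missing. By Chevalley's theorem, the ${\rm SL}_2({\mathbb C})$-reduction $R'(\widehat{X})\subset R(\widehat{X})$ is cut out by finitely many holomorphic tensors $\theta_i\in H^0(\widehat{X},(T\widehat{X})^{\otimes p_i}\otimes(T^*\widehat{X})^{\otimes q_i})$; the Bochner principle for Ricci-flat K\"ahler metrics makes these parallel, so the reduction is preserved by the Levi--Civita connection and the holonomy of $\varphi^*g$ lands in the maximal compact ${\rm SU}(2)\subset{\rm SL}_2({\mathbb C})$. Combined with the Beauville--Bogomolov holonomy list this leaves only $T_l$ or $T_l\times(\text{K3})$, and the latter is killed because the holonomy then acts on ${\mathbb C}^n$ through the irreducible representation $S^{n-1}$ of ${\rm SU}(2)$, which has no nonzero invariants, whereas the torus factor $T_l$ with $l=n-2\geq 2$ would supply flat sections of $T\widehat{X}$. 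You would need to import this holonomy argument (or find a genuinely different one); the Chern-number route is a dead end at precisely the critical case.
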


\begin{proof}
Let $E$ be a holomorphic vector bundle on $X$ and 
$$TX \,\simeq\, S^{n-1} (E)$$
an isomorphism with the symmetric product, 
giving a $\text{GL}_2({\mathbb C})$--structure on $X$. Then
$$
TX \,=\, S^{n-1} (E)\,=\, S^{n-1} (E)^*\otimes (\bigwedge\nolimits^2E)^{\otimes (n-1)}
\,=\, (TX)^*\otimes L\, ,
$$
where $L\,=\, (\bigwedge\nolimits^2E)^{\otimes (n-1)}$. The above isomorphism between
$TX$ and $(TX)^*\otimes L$ produces, when $n$ is even, a holomorphic section
$$
\omega\, \in\, H^0(X,\, \Omega^2_X \otimes L)
$$
which is a fiberwise nondegenerate $2$--form with values in $L$. Writing $n\,=\, 2m$, the
exterior product
$$
\omega^m\,\in \, H^0(X,\, K_X \otimes L^m)
$$
is a nowhere vanishing section, where $K_X\,=\, \Omega^n_X$ is the canonical line bundle on $X$.
Consequently, we have $K_X\,\simeq\,(L^*)^m$, in particular, $c_1(X)\,=\,mc_1(L)$.
Any Hermitian metric on $TX$ induces an associated Hermitian metric on $L^m$, and hence
produces an Hermitian metric on $L$.

We now use a result of Istrati, \cite[p.~747, Theorem 2.5]{Is1}, which says that
$c_1(X)\,=\,c_1(L)\,=\,0$. Hence, by 
Yau's proof of Calabi's conjecture, \cite{Ya}, there exists a Ricci flat  K\"ahler metric $g$ 
on $X$.

Using de Rham decomposition theorem and Berger's classification of the irreducible holonomy groups 
of nonsymmetric Riemannian manifolds (see \cite{Jo}, Section 3.2 and Theorem 3.4.1 in Section 
3.4) we deduce that the universal cover $(\widetilde{X}, \,\widetilde{g}) $ of $(X,\,g)$ is
a Riemannian product
\begin{equation}\label{e1}
(\widetilde{X},\, \widetilde{g})\,=\, ({\mathbb C}^l,\, g_0) \times
(X_1,\, g_1) \times \cdots \times (X_p,\, g_p)\, ,
\end{equation}
where $({\mathbb C}^l, \, g_0)$ is the standard flat complete K\"ahler manifold and $(X_i,\, 
g_i)$ is an irreducible Ricci flat K\"ahler manifold of complex dimension $r_i\, \geq\, 2$, for every $1\, 
\leq\, i\, \leq\, p$. The holonomy of each $(X_i,\, g_i)$ is either $\text{SU}(r_i)$ 
or the symplectic group ${\rm Sp}(\frac{r_i}{2})$, where $r_i\, =\, \dim_{\mathbb C} X_i$
(in the second case $r_i$ is even). Notice, in 
particular, that symmetric irreducible Riemannian manifolds of (real) dimension at least two 
are never Ricci flat. For more details, the reader can refer to \cite[Theorem 1]{Be}
and \cite[p.~124, Proposition 6.2.3]{Jo}.

Using Cheeger--Gromoll theorem it is possible to deduce the  Beauville--Bogomolov decomposition theorem (see 
\cite[Theorem 1]{Be} or \cite{Bo1}) which says that there is a finite \'etale Galois covering
$$
\varphi\, :\, \widehat{X}\, \longrightarrow\, X\, ,
$$
such that
\begin{equation}\label{e2}
(\widehat{X},\, \varphi^*g)\, =\, (T_l,\, g_0) \times
(X_1,\, g_1) \times \cdots \times (X_p,\, g_p)\, ,
\end{equation}
where $(X_i,\, g_i)$ are as in \eqref{e1} and $(T_l,\, g_0)$ is a flat compact complex torus of dimension
$l$. Note that the K\"ahler metric $\varphi^*g$ is Ricci--flat because $g$ is so.

The holomorphic $\text{GL}_2({\mathbb C})$--structure on $X$ produces a holomorphic 
$\text{SL}_2({\mathbb C})$--structure on a finite unramified cover of $\widehat{X}$ in 
\eqref{e2}. More precisely, $T\widehat{X}\,=\, S^{n-1}(\varphi^* E)$ and since the canonical 
bundle $K_{\widehat{X}}$ is trivial, the holomorphic line bundle $\bigwedge^2 E$ is torsion. 
This implies that on a finite unramified cover of $\widehat{X}$ (still denoted by 
$\widehat{X}$ for notations convenience) we can (and will) assume that $\bigwedge^2 E$ is 
holomorphically trivial.

The above $\text{SL}_2({\mathbb C})$--structure on $\widehat{X}$
gives a holomorphic reduction $R'(\widehat{X})\, \subset\, R(\widehat{X})$ of the structure group of the
frame bundle $R(\widehat{X})$ of $\widehat{X}$ 
from $\text{GL}_n({\mathbb C})$ to $\text{SL}_2({\mathbb C})$; recall that the homomorphism 
$\text{SL}_2({\mathbb C})\,\longrightarrow\, \text{GL}_n({\mathbb C})$ is given by the 
$(n-1)$--th symmetric product of the standard representation, and this homomorphism is 
injective because $n$ is even. There is a  finite set of holomorphic tensors $\theta_1, \ldots, \theta_s$ on $\widehat{X}$ satisfying the 
condition that the $\text{SL}_2({\mathbb C})$--subbundle $R'(\widehat{X})\, \subset\,R(\widehat{X})$ consists of 
those frames that pointwise preserve each  $\theta_i$. Indeed, this is a consequence of Chevalley's 
theorem asserting that there exists a finite dimensional linear representation $W$ of 
$\text{GL}_n({\mathbb C})$, and an element $$\theta_0 \,\in\, W\, ,$$ such that the stabilizer 
of  the line ${\mathbb C} \theta_0$ is the image of the above homomorphism $\text{SL}_2({\mathbb 
C})\,\longrightarrow\, \text{GL}_n({\mathbb C})$ (see \cite[p.~80, Theorem 11.2]{Hu},
\cite[p.~40, Proposition 3.1(b)]{DMOS}; since
$\text{SL}_2({\mathbb C})$ does not have a nontrivial character, the line  ${\mathbb C} \theta_0$ 
is fixed pointwise.  The group  $\text{GL}_n({\mathbb C})$ being reductive,  we decompose $W$ as  a direct sum  $\bigoplus_{i=1}^s W_i$ of irreducible representations.  Now, since any irreducible 
representation $W_i$ of the reductive group $\text{GL}_n({\mathbb C})$ is a factor of a 
representation $({\mathbb C}^n)^{\otimes p_i} \otimes (({\mathbb C}^n)^*)^{\otimes q_i},$ for some 
integers $p_i,\,q_i\, \geq \, 0$ \cite[p.~40, Proposition 3.1(a)]{DMOS}, the above element $\theta_0$
gives rise to a finite set $\theta_1, \ldots ,  \theta_s$  of  holomorphic tensors
\begin{equation}\label{th}
\theta_i  \in H^0(\widehat{X},\,(T{\widehat{X}})^{\otimes p_i}\otimes (T^*{\widehat{X}})^{\otimes q_i})
\end{equation}
with $p_i,q_i\, \geq \, 0$. By construction, $\theta_1, \ldots, \theta_s$ are simultaneously  stabilized exactly by the frames lying in $R'(\widehat{X})$.

Consider the Levi--Civita connection on $\widehat{X}$ associated to the Ricci--flat K\"ahler 
metric $\varphi^*g$ in \eqref{e2}. It is known that the parallel transport for it preserves any 
holomorphic tensor on $\widehat{X}$ \cite[p.~50, Theorem 2.2.1]{LT}. In particular, $\theta_i$ in 
\eqref{th} are  parallel with respect to this connection. Hence we conclude that the subbundle 
$R'(\widehat{X})\, \subset\, R(\widehat{X})$ defining the $\text{SL}_2({\mathbb C})$-structure 
(considered as a $G$--structure) is invariant under the parallel transport by the Levi--Civita 
connection for $\varphi^*g$. This implies that the holonomy group of $\varphi^*g$ lies in the 
maximal compact subgroup of $\text{SL}_2({\mathbb C})$. Hence the holonomy group of $\varphi^*g$ 
lies in $\text{SU}(2)$.

From \eqref{e2} it follows that the holonomy of $\varphi^*g$ is
\begin{equation}\label{e3}
\text{Hol}(\varphi^*g)\,=\, \prod_{i=1}^p \text{Hol}(g_i)\, ,
\end{equation}
where $\text{Hol}(g_i)$ is the holonomy of $g_i$. As noted earlier,
\begin{itemize}
\item either $\text{Hol}(g_i)\,=\, \text{SU}(r_i)$, with $\dim_{\mathbb C} X_i \,=\, r_i\, \geq\, 2$, or

\item $\text{Hol}(g_i)\,=\, {\rm Sp}(\frac{r_i}{2})$, where $r_i\, =\, \dim_{\mathbb C} X_i$ is
even.
\end{itemize}
Therefore, the above observation, that $\text{Hol}(\varphi^*g)$ is contained in $\text{SU}(2)$,
and \eqref{e3} together imply that
\begin{enumerate}
\item either $(\widehat{X},\, \varphi^*g)\, =\, (T_l,\, g_0)$, or

\item $(\widehat{X},\, \varphi^*g)\, =\, (T_l,\, g_0) \times (X_1,\, g_1)$, where $X_1$ is a K3
surface equipped with a Ricci--flat K\"ahler metric $g_1$.
\end{enumerate}

If $(\widehat{X},\, \varphi^*g)\, =\, (T_l,\, g_0)$, then then proof of the theorem evidently is complete.

Therefore, we assume that
\begin{equation}\label{a1}
(\widehat{X},\, \varphi^*g)\, =\, (T_l,\, g_0) \times (X_1,\, g_1)\, ,
\end{equation}
where $X_1$ is a K3 surface equipped with a Ricci--flat K\"ahler metric $g_1$. Note that
$l\, \geq\, 2$ (because $l+2\,=\, n\, \geq\, 4$) and $l$ is even (because $n$ is so).

Since $\text{Hol}(g_1)\,=\, \text{SU}(2)$, we get  from (\ref{e3})  that $\text{Hol}(\varphi^*g)\,=\, \text{SU}(2)$. The holonomy of  $\varphi^*g$  is 
the image of the homomorphism
\begin{equation}\label{h0}
h_0\, :\, \text{SU}(2)\, \longrightarrow\, \text{SU}(n)
\end{equation}
given by the $(n-1)$--th symmetric power of the standard representation. The action of
$h_0(\text{SU}(2))$ on ${\mathbb C}^n$, obtained by restricting the standard action of $\text{SU}(n)$,
is irreducible. In particular, there are no nonzero
$\text{SU}(2)$--invariants in ${\mathbb C}^n$.  

On the other hand we have that:
\begin{itemize}
\item the direct summand of $T \widehat{X}$ given by the tangent bundle $TT_l$ is
preserved by the Levi--Civita connection on $T \widehat{X} $ corresponding to $\varphi^*g$, and

\item this direct summand of $T \widehat{X}$ given by $TT_l$ is generated by flat sections of $TT^l$.
\end{itemize}

Since $T \widehat{X}$ does not have any flat section, we conclude that $l=0$: a contradiction.
\end{proof} 

Recall that a  compact K\"ahler manifold of odd complex dimension bearing a holomorphic 
$\text{SL}_2({\mathbb C})$--structure also admits a holomorphic Riemannian metric and  inherits of  the associated 
holomorphic (Levi-Civita) affine connection. Those manifolds are known to have vanishing 
Chern classes  \cite{At} and, consequently, all of them  are covered by  compact complex tori 
\cite{IKO}.

Therefore  Theorem \ref{kahler even} has the following corollary.

\begin{corollary}\label{sl2}
Let $X$ be a compact K\"ahler manifold of complex dimension $n \,\geq\, 3$
bearing a holomorphic ${\rm SL}_2({\mathbb C})$--structure. Then $X$ admits a finite unramified
covering by a compact complex torus.
\end{corollary}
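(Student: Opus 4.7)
The plan is to split the argument according to the parity of $n$. Since $n \geq 3$, either $n$ is even with $n \geq 4$, or $n$ is odd with $n \geq 3$.

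For even $n \geq 4$, a holomorphic $\text{SL}_2({\mathbb C})$--structure is a particular holomorphic $\text{GL}_2({\mathbb C})$--structure on $X$. Therefore Theorem \ref{kahler even} applies directly and produces the desired finite unramified covering of $X$ by a compact complex torus.

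For odd $n \geq 3$, I would invoke the construction recalled in Section \ref{contexte}: the $\text{SL}_2({\mathbb C})$--structure induces a holomorphic Riemannian metric on $X$, i.e., a fiberwise nondegenerate holomorphic section of $S^2 T^*X$. As in the real smooth setting, this holomorphic metric has a canonically attached holomorphic Levi-Civita connection on $TX$; in particular, $TX$ admits a holomorphic connection. By Atiyah's theorem \cite{At}, the existence of such a holomorphic connection on $TX$ forces all rational Chern classes $c_i(X) \in H^{2i}(X, {\mathbb Q})$ to vanish. The result of Inoue-Kobayashi-Ochiai \cite{IKO} then states that a compact K\"ahler manifold carrying a holomorphic affine connection and whose rational Chern classes vanish is finitely covered by a compact complex torus, which finishes the proof in this case.

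There is essentially no genuine obstacle to overcome: the corollary is an assembly of Theorem \ref{kahler even} (even-dimensional case) with two classical theorems (odd-dimensional case). The only conceptual point worth flagging is that the Levi-Civita construction, being algebraic in the metric tensor and its first derivatives, transfers verbatim from the real smooth to the holomorphic category, so that a holomorphic Riemannian metric truly delivers a holomorphic affine connection on $X$, which is what feeds into the Atiyah-IKO mechanism.
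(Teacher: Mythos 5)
Your argument is correct and is essentially identical to the paper's: the even-dimensional case is handled by Theorem \ref{kahler even} (an $\text{SL}_2({\mathbb C})$--structure being a special $\text{GL}_2({\mathbb C})$--structure), and the odd-dimensional case goes through the induced holomorphic Riemannian metric, its holomorphic Levi--Civita connection, Atiyah's vanishing of Chern classes \cite{At}, and the torus-covering result of \cite{IKO}. No discrepancies to report.
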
 

We will prove an analog of Theorem \ref{kahler even} for Fujiki class $\mathcal C$ manifolds. 
Recall that a compact complex manifold $Y$ is in the class $\mathcal C$ of Fujiki if $Y$ is the 
image of a K\"ahler manifold through a holomorphic map. By a result of Varouchas in \cite{Va} we 
know that Fujiki class $\mathcal C$ manifolds are precisely those that are bimeromorphic to 
K\"ahler manifolds.

In order to use a result in \cite{BDG} (Theorem A) we will need to make a technical assumption. Let 
us recall some standard terminology:

\begin{definition} \label{technical}
Let $X$ be a compact complex manifold of dimension $n$, and let $$[\alpha]\,\in \,
H^{1,1}(X,\, {\mathbb R})$$ be a cohomology class represented by a
smooth, closed $(1,1)$-form $\alpha$.
\begin{enumerate}
\item $[\alpha]$ is \textit{numerically effective (nef)} if for any $\epsilon >0$, there exists a smooth representative
$\omega_{\epsilon}\,\in\, [\alpha]$ such that $\omega_{\epsilon}\,\ge\, -\epsilon \omega_X$, where $\omega_X$ is some
fixed (independent of $\epsilon$) hermitian metric on $X$.

\item $[\alpha]$ has \textit{positive self-intersection} if $\int_X \alpha^n\,>\,0$. 
\end{enumerate}
\end{definition}

It should be mentioned that Demailly and P\u{a}un conjectured the following (\cite[p.~1250, Conjecture~0.8]{DP}): If a complex compact 
manifold $X$ possesses a nef cohomology class $\lbrack \alpha\rbrack$ which has positive
self-intersection, then $X$ lies in the Fujiki class $\mathcal C$.

 The above  conjecture
would imply that a nef class $[\alpha]\,\in\, H^{1,1}(X,\,\mathbb R)$ on a compact complex manifold $X$  has positive self-intersection
if and only if $[\alpha]$ is big (i.e., 
there exists a closed $(1,1)$-current $T\,=\,\alpha+dd^c u\,\in \,[\alpha]$ such that $T\,\ge\, \omega_X$ in the sense of
currents, where $u\in L^1(X)$ and $\omega_X$  is some hermitian metric on $X$) \cite{DP} (see also \cite{BDG}, Corollary 2.6).  

It is not easy to decide which   manifolds  in Fujiki class $\mathcal C$  admit such a nef class with positive self-intersection.

The reader is referred to Section 2 of \cite{BDG} for a  detailed discussion
on Definition \ref{technical}.

\begin{theorem}\label{Fujiki}
Let $X$ be a compact complex manifold in Fujiki class $\mathcal C$ bearing a holomorphic 
${\rm GL}_2({\mathbb C})$--structure. Assume that there exists a cohomology class $\lbrack \alpha 
\rbrack\,\in\, H^{1,1}(X,\, \mathbb R)$ which is nef and has positive self-intersection.
Then the following two statements hold.
\begin{enumerate}
\item[(i)] If the complex dimension of $X$ is even and at least $4$, then there exists a non-empty 
Zariski open subset $\Omega\,\subset\, X$ admitting a flat K\"ahler metric.

\item[(ii)] If the complex dimension of $X$ is odd and the first Chern class of $X$ vanishes, then $X$ 
admits a finite unramified cover which is a torus.
\end{enumerate}
\end{theorem}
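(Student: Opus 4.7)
The plan for both parts is to extract from the ${\rm GL}_2({\mathbb C})$--structure the same algebraic data that fueled the proof of Theorem~\ref{kahler even}, and then invoke Theorem~A of \cite{BDG} as a substitute for Yau's theorem and the Beauville--Bogomolov decomposition, which are unavailable outside the K\"ahler setting. The technical nef hypothesis on $[\alpha]$ is precisely what is required as input for that theorem.

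For part~(i), with $n = 2m \geq 4$ even, the opening computation from Theorem~\ref{kahler even} applies without any K\"ahler assumption: we obtain a fiberwise nondegenerate twisted two-form $\omega \in H^0(X,\, \Omega^2_X \otimes L)$ with $L = (\bigwedge^2 E)^{\otimes (n-1)}$, and $\omega^m \in H^0(X,\, K_X \otimes L^m)$ is nowhere vanishing, so that $(\omega,\,L)$ is a twisted holomorphic symplectic form in the sense of \cite{Is1, Is2}. Feeding this datum together with the nef class $[\alpha]$ of positive self-intersection into Theorem~A of~\cite{BDG} should yield directly the desired Zariski open subset $\Omega \subset X$ equipped with a flat K\"ahler metric.

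For part~(ii), with $n$ odd, the ${\rm GL}_2({\mathbb C})$--structure yields a holomorphic conformal structure, namely a fiberwise nondegenerate section of $S^2(T^*X) \otimes L'$, where $L'$ is a suitable power of $\bigwedge^2 E$. Taking determinants of $TX \simeq S^{n-1}(E)$ one finds that $K_X^{-1}$ is a power of $\bigwedge^2 E$, so the hypothesis $c_1(X) = 0$ forces the rational first Chern class of $\bigwedge^2 E$ to vanish. Applying Theorem~A of~\cite{BDG} I expect to produce a finite \'etale cover $\widehat{X} \to X$ on which $\bigwedge^2 E$ becomes holomorphically trivial; the structure then upgrades to an ${\rm SL}_2({\mathbb C})$--structure, hence (in odd dimension) a holomorphic Riemannian metric equipped with its holomorphic Levi--Civita affine connection. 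Atiyah's obstruction~\cite{At} then forces vanishing of the real Chern classes of $\widehat{X}$, after which the Fujiki class ${\mathcal C}$ condition combined with~\cite{IKO} (suitably adapted via~\cite{BDG}) yields that $\widehat{X}$ is a compact complex torus.

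The main obstacle is the passage from the K\"ahler arguments of Theorem~\ref{kahler even}, which relied essentially on Yau's theorem and the irreducible decomposition of the universal cover into torus and K3 factors, to the Fujiki class ${\mathcal C}$ setting where no Ricci-flat metric is available. The nef class with positive self-intersection is the weaker positivity input that Theorem~A of~\cite{BDG} requires to reach an analogous structural conclusion, at the cost of producing the flat K\"ahler metric in part~(i) only on a Zariski open subset rather than on all of $X$, and of needing $c_1(X) = 0$ as a separate hypothesis in the odd dimensional part~(ii).
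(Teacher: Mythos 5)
Your overall strategy (reduce to the algebraic data of Theorem \ref{kahler even} and use \cite{BDG} as a substitute for Yau's theorem) is the right one, but both parts have genuine gaps where you ask Theorem~A of \cite{BDG} to do work it does not do. In part (i), Theorem~A of \cite{BDG} takes as \emph{hypothesis} that $c_1(X)=0$, and its \emph{output} is a Ricci-flat (not flat) K\"ahler metric on a Zariski open subset $\Omega$, together with a Bochner principle stating that global holomorphic tensors are parallel on $\Omega$. You therefore need two further arguments, both absent from your proposal. First, you must prove $c_1(X)=0$: in the Fujiki class $\mathcal C$ setting this is done by observing (following the proof of Theorem~2.5 in \cite{Is1}) that the line bundle $L$ carries a holomorphic connection, so $c_1(L)$ has a representative of type $(2,0)$ as well as the usual $(1,1)$ representative, and on a Fujiki class $\mathcal C$ manifold a class with representatives of two different types must vanish; then $c_1(X)=m\,c_1(L)=0$. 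Second, and more seriously, the passage from Ricci-flat to flat is the heart of the proof: one uses the tensors cutting out the ${\rm SL}_2({\mathbb C})$-reduction, the Bochner principle to force the restricted holonomy of $g$ into ${\rm SU}(2)$, and then the de Rham local splitting together with Berger's classification (the local irreducible factors have holonomy ${\rm SU}(r_i)$ or ${\rm Sp}(r_i/2)$) to conclude that either the holonomy is trivial, whence $g$ is flat on $\Omega$, or the holonomy is exactly ${\rm SU}(2)$, which by irreducibility of the symmetric-power representation kills the flat factor and forces $\dim_{\mathbb C}X=2$, contradicting $n\ge 4$. Without this argument the conclusion of (i) is simply not reached.

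In part (ii) your skeleton is closer to the paper's, but the key citation is wrong: the step producing a finite unramified cover with trivial canonical bundle from $c_1(X)=0$ is Theorem~1.5 of \cite{To} (Tosatti), not Theorem~A of \cite{BDG}; once $K_X$ is trivial, $\bigwedge^2E$ is torsion because $K_X\simeq((\bigwedge^2E)^*)^{n(n-1)/2}$, so a further finite cover carries an ${\rm SL}_2({\mathbb C})$-structure and hence a holomorphic Riemannian metric. The final conclusion is then exactly Theorem~C of \cite{BDG}; your proposed route through \cite{At} and \cite{IKO} is the K\"ahler-case argument and does not apply verbatim here, which is why you had to append the caveat ``suitably adapted via \cite{BDG}'' --- that adaptation is precisely the content of Theorem~C, so you should invoke it directly rather than gesture at it.
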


\begin{proof}
(i)~ Since the complex dimension of $X$ is even, $X$ inherits a twisted holomorphic 
symplectic form $ \omega\, \in\, H^0(X,\, \Omega^2_X \otimes L)$, defined by a a holomorphic 
line bundle $L$ and a non degenerate $L$-valued holomorphic two form $\omega$.

The proof of Theorem 2.5 in \cite{Is1} shows that $L$ inherits a holomorphic connection (see 
Remark 2.7 in \cite{Is1}) and hence its curvature, representing the first Chern class of 
$L$, is a holomorphic two form. In particular, $c_1(L)$ admits a representative which is a 
two form on $X$ of type $(2,0)$. On the other hand, starting with a Hermitian metric on $L$, 
the classical computation of the first Chern class using the associated Chern connection 
gives a representative of $c_1(L)$ which is a two form on $X$ of type $(1,1)$. Since on 
Fujiki class $\mathcal C$ manifolds (just as for K\"ahler manifolds) forms of different 
types are cohomologous only in the trivial class this implies that $c_1(L)=0$ and, 
consequently, as in the proof of Theorem \ref{kahler even}, $c_1(X)=0$.

Then Theorem A of \cite{BDG} constructs a closed, positive $(1,1)$-current on $X$,
lying in the class $[\alpha]$,
which induces a genuine Ricci-flat K\"ahler metric $g$ on a non-empty Zariski open subset 
$\Omega\,\subset\, X$. Furthermore, given any global holomorphic tensor $\theta\,\in\, 
H^0(X,\, (TX)^{\otimes p}\otimes T^*X)^{\otimes q})$, with $p,\, q\, \geq \, 0$, the restriction of 
$\theta$ to $\Omega$ is parallel with respect to the Levi--Civita connection on $\Omega$ associated
to $g$.

As in the proof of Theorem \ref{kahler even}, this implies that the  restricted holonomy group of Levi--Civita
connection for $g$ lies inside $\text{SU}(2)$.

Take any $u \,\in\, \Omega$. Using de Rham's local splitting theorem, there exists a local 
decomposition of an open neighborhood $U^u\, \subset\, \Omega$ of $u$ such that $(U^u,\, g)$
is a Riemannian product
\begin{equation}\label{e0p}
(U^u,\, g)\,=\, (U_0,\, g_0) \times \cdots \times (U_p,\, g_p)\, ,
\end{equation}
where $(U_0, \, g_0)$ is a flat K\"ahler manifold and $(U_i,\, g_i)$ is an irreducible K\"ahler 
manifold of complex dimension $r_i\, \geq\, 2$ for every $1\, \leq\, i\, \leq\, p$ (the reader is 
referred to \cite[Proposition 2.9]{GGK} for more details on this local K\"ahler decomposition). 
Since $g$ is Ricci flat, each $(U_i,\, g_i)$ is also Ricci flat. For every $1\, \leq\, i\, \leq\, 
p$, the restricted  holonomy $\text{Hol}_0(g_i)$ of the Levi--Civita connection for $g_i$ satisfies the 
following:
\begin{itemize}
\item either $\text{Hol}_0(g_i)\,=\, \text{SU}(r_i)$, or

\item $\text{Hol}_0(g_i)\,=\, {\rm Sp}(\frac{r_i}{2})$ (in this case 
$r_i$ is even);
\end{itemize}
see \cite[Proposition 5.3]{GGK}. The restricted  holonomy $\text{Hol}_0(g)$ of the Levi--Civita connection
for $g$ is
\begin{equation}\label{e1p}
\text{Hol}_0(g)\,=\, \prod_{i=1}^p \text{Hol}_0(g_i)\, .
\end{equation}

As observed above, $\text{Hol}_0(g)$ lies inside $\text{SU}(2)$. First assume that
$$\text{Hol}_0(g)\, \subsetneq\, \text{SU}(2)\, .$$
Therefore, $\dim \text{Hol}_0(g)\, < \,3$.
Now from \eqref{e1p} it follows that $\text{Hol}_0(g)\, =\, 1$ and there is no factor $(U_i,\, g_i)$ with $1\,\leq\, i\,\leq\, p$ in the decomposition \eqref{e0p} (because $\dim \text{Hol}_0(g_i)
\, \geq\, 3$ for every $1\,\leq\, i\,\leq\, p$). It follows
that $ (U^u,\, g)\,=\, (U_0,\, g_0)$ and hence $\Omega$ admits the flat K\"ahler metric $g$.

Therefore, now assume that 
$$\text{Hol}_0(g)\, =\, \text{SU}(2)\, .$$
From \eqref{e1p} we conclude the following:
\begin{itemize}
\item $p \,=\, 1$, meaning $(U^u,\, g)\,=\, (U_0,\, g_0) \times (U_1,\, g_1)$,

\item $\dim_{\mathbb C} U_1\,=\, 2$, and

\item $\text{Hol}_0(g_1)\,=\, \text{SU}(2)$.
\end{itemize}
Consider the homomorphism in \eqref{h0}. Consider ${\mathbb C}^n$ as a $\text{SU}(2)$--representation
using $h_0$ and the standard representation of $\text{SU}(n)$. This
$\text{SU}(2)$--representation is irreducible, in particular, there are no nonzero
$\text{SU}(2)$--invariants in ${\mathbb C}^n$. From this it can be deduced that $\dim U_0\,=\,
0$, where $U_0$ is the factor in \eqref{e0p}. Indeed,
\begin{itemize}
\item the direct summand of $TU^u$ given by the tangent bundle $TU_0$ is
preserved by the Levi--Civita connection on $TU^u$ corresponding to $g$, and

\item this direct summand of $TU^u$ given by $TU_0$ is generated by flat sections of $TU^u$.
\end{itemize}
Since $TU^u$ does not have any flat section, we conclude that $\dim_{\mathbb C}  U_0\,=\, 0$.

Therefore, $(U^u,\, g)\,=\, (U_1,\, g_1)$, and $\dim_{\mathbb C}  U^u \,=\, \dim_{\mathbb C}  U_1\,=\,2$. This contradicts
the assumption that $\dim_{\mathbb C} X\, \geq\, 4$.
Hence the proof of (i) is complete.

(ii)~ Fix a $\text{GL}_2({\mathbb C})$--structure on $X$. As the dimension of $X$ is odd, the 
$\text{GL}_2({\mathbb C})$--structure on $X$ produces a holomorphic conformal structure on $X$. 
Since the first Chern class of $X$ vanishes, by Theorem 1.5 of \cite{To}, there exists a finite 
unramified covering of $X$ with trivial canonical bundle. Replacing $X$ by this finite unramified 
covering we shall assume that $K_X$ is trivial.

As in the proof of Theorem \ref{kahler even}, this implies that, up to a finite unramified cover,  the $\text{GL}_2({\mathbb 
C})$--structure of $X$ is induced by a $\text{SL}_2({\mathbb C})$--structure on $X$. In particular, 
$X$ admits a holomorphic Riemannian metric. Now Theorem C of \cite{BDG} says that $X$ admits a 
finite unramified cover which is a compact complex torus.
\end{proof}

If $X$ is K\"ahler, the open subset $\Omega\,\subset\, X$ in Theorem \ref{Fujiki} (point 
(i)) is the entire manifold. 

We note that there are simply connected non-K\"ahler manifolds in Fujiki class $\mathcal C$
that admit a holomorphic symplectic form. Such examples were constructed in \cite[Example 21.7]{Huy}.

A compact complex manifold is called Moishezon if it is
bimeromorphic to a projective manifold \cite{Mo}. It is known that Moishezon manifolds
bearing a holomorphic Cartan geometry (in 
particular, a holomorphic conformal structure \cite{Sh}) are projective (see Corollary 
2 in \cite{BM}). Therefore a Moishezon manifold admitting a $\text{SL}_2({\mathbb C})$--structure 
is covered by an abelian variety.
 
We conjecture that the statement of Theorem \ref{Fujiki} holds for all Fujiki class $\mathcal 
C$ manifolds. In particular, we conjecture that a Fujiki class $\mathcal C$ manifold bearing a 
$\text{SL}_2({\mathbb C})$--structure is covered by a compact complex torus.

\section{$\text{GL}_2({\mathbb C})$-structures on K\"ahler-Einstein and Fano manifolds}\label{KE 
and Fano}

\begin{theorem}\label{KE}
Let $X$ be a compact K\"ahler--Einstein manifold, of complex dimension at least three,
bearing a holomorphic ${\rm GL}_2({\mathbb 
C})$-structure. Then one of the following three holds:
\begin{enumerate}
\item $X$ admits an unramified covering by a compact complex torus;

\item $X$ is the three dimensional quadric $Q_3$ equipped with its standard  ${\rm GL}_2({\mathbb 
C})$--structure;
 
\item $X$ is covered by the three-dimensional Lie ball $D_3$ (the noncompact dual of the Hermitian 
symmetric space $Q_3$) endowed with the standard ${\rm GL}_2({\mathbb C})$-structure.
\end{enumerate}
\end{theorem}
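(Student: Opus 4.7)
The plan is to split the argument into cases according to the parity of $n=\dim_{\mathbb C} X$, and in the odd case according to the sign of $c_1(X)$. When $n$ is even and $n\geq 4$, Theorem~\ref{kahler even} applies directly and yields conclusion~(1). So assume $n$ is odd, $n\geq 3$. The $\text{GL}_2(\mathbb C)$-structure $TX\simeq S^{n-1}(E)$ produces a holomorphic conformal structure, and since $\det S^{n-1}(E)=(\det E)^{n(n-1)/2}$ one obtains the key identity
$$
K_X^{-1}\;\simeq\;(\det E)^{n(n-1)/2}.
$$
Because $X$ is K\"ahler--Einstein, $c_1(X)$ has a definite sign, giving three subcases.

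If $c_1(X)=0$, the identity above shows $\det E$ is torsion; after a finite unramified cover $\det E$ is trivial, promoting the structure to an $\text{SL}_2(\mathbb C)$-structure, and Corollary~\ref{sl2} delivers conclusion~(1). If $c_1(X)>0$, then $X$ is Fano and Theorem~\ref{Fano} yields conclusion~(2): $X\cong Q_3$ with its standard $\text{GL}_2$-structure. The substantive remaining subcase is $c_1(X)<0$, where one must show that $n=3$ and that $X$ is covered by the Lie ball $D_3$ with its standard flat conformal geometry.

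For the negative case my plan is to exploit the interaction between the K\"ahler--Einstein metric of negative Einstein constant and the holomorphic conformal structure coming from the $\text{GL}_2$-datum. Concretely, one builds the canonical holomorphic normal Cartan connection modeled on $(\text{PSO}(n+2,\mathbb C),\, Q_n)$ associated to the conformal structure, and shows its curvature vanishes by combining the Einstein equation with the additional rigidity supplied by the $\text{GL}_2$-datum (which is strictly stronger than the bare conformal class when $n\geq 5$). A developing-map argument then identifies the universal cover of $X$ with an open subset of $Q_n$; the signature analysis of the $\text{SO}_0(n,2)$-orbits on $Q_n$ recalled in Section~\ref{contexte} shows that the sign of the Einstein constant forces this open subset to be the non-compact dual $D_n$. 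Finally, since the irreducible $(n{-}1)$-fold symmetric-power embedding $\text{GL}_2(\mathbb C)\hookrightarrow \text{GL}_n(\mathbb C)$ agrees with the holonomy representation of the Hermitian symmetric space $D_n$ only in dimension $n=3$, we conclude $n=3$ and obtain conclusion~(3).

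The principal obstacle is proving flatness of the conformal structure in the $c_1(X)<0$ subcase without invoking the Kobayashi--Ochiai classification \cite{KO2,KO3}, which the introduction explicitly disavows. A workable substitute should exploit features unique to the $\text{GL}_2$-setting: the rank-two bundle $E$, the identity $K_X^{-1}\simeq(\det E)^{n(n-1)/2}$, and the Hermite--Einstein structure on $E$ (or on an appropriate twist) inherited from the K\"ahler--Einstein metric on $X$ via Uhlenbeck--Yau--Donaldson. The hope is that vanishing-theorem arguments applied to the curvature of $E$ force the induced symmetric-power connection on $TX=S^{n-1}(E)$ to coincide with the conformal normal connection, thereby yielding flatness and closing the argument.
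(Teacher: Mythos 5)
Your reduction to cases is sound, and three of the four branches are handled correctly: the even case via Theorem \ref{kahler even}, the Ricci-flat case via the torsion of $\det E$ and Corollary \ref{sl2}, and the Fano case via Theorem \ref{Fano}. (The paper organizes the cases slightly differently --- it splits only on whether the Einstein constant vanishes, treating both signs of a nonzero Einstein constant by one argument --- but your division is compatible with it.) The problem is that the remaining branch, $c_1(X)<0$ in odd dimension, is precisely the substantive content of the theorem (it is where $n=3$ and the Lie ball must emerge), and for it you offer only a program: you state yourself that proving flatness of the conformal structure is ``the principal obstacle'' and that you ``hope'' a vanishing-theorem argument on the curvature of $E$ will force the symmetric-power connection to agree with the conformal normal connection. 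No such argument is given, and it is not clear how to make one work; moreover your final step --- deducing $n=3$ because the symmetric-power embedding ${\rm GL}_2(\mathbb C)\hookrightarrow{\rm GL}_n(\mathbb C)$ ``agrees with the holonomy representation of $D_n$ only for $n=3$'' --- presupposes an identification of the universal cover with $D_n$ carrying its symmetric metric, which you have not established. So the proposal as written has a genuine gap at its core.

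The paper closes this gap by a quite different mechanism, which never proves flatness first and never passes through the Cartan connection. For $e_g\neq 0$ it observes that the Hermitian--Einstein metric induced by $g$ on ${\rm End}(TX)=(TX)\otimes (TX)^*$ has Einstein factor zero, so every holomorphic section of $(TX)^{\otimes m}\otimes((TX)^*)^{\otimes m}$ is parallel for the Chern connection \cite[p.~50, Theorem 2.2.1]{LT}. The ${\rm GL}_2(\mathbb C)$--reduction of $R(X)$ induces a ${\rm PGL}_2(\mathbb C)$--reduction of the frame bundle of ${\rm End}(TX)$, which by Chevalley's theorem (and the absence of nontrivial characters of ${\rm PGL}_2(\mathbb C)$) is cut out by finitely many holomorphic tensors of this balanced type; these are therefore parallel. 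Together with the fact that ${\rm ad}(R'(X))$ is a degree-zero subbundle of the degree-zero polystable bundle ${\rm End}(TX)$, this shows the Levi--Civita connection preserves the ${\rm GL}_2(\mathbb C)$--subbundle $R'(X)$, hence ${\rm Hol}(g)\subseteq {\rm GL}_2(\mathbb C)\cap{\rm U}(n)={\rm U}(2)$. Berger's list plus the de Rham decomposition then force $X$ to be Hermitian locally symmetric, and the classification of holonomies of such spaces yields exactly $Q_3$ or a quotient of $D_3$ --- this is where $n=3$ comes from, with no developing map needed. Only after the manifold is identified does the paper quote \cite{KO,Kl,HM,Ye} for the uniqueness of the ${\rm GL}_2(\mathbb C)$--structure on $Q_3$ and on quotients of $D_3$. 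If you want to salvage your outline, the step you must supply is precisely this holonomy reduction (or an equivalent substitute); the curvature-of-$E$ vanishing argument you sketch is not known to produce it.
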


\begin{proof}
Let $n$ be the complex dimension of $X$.
Let $g$ be a K\"ahler--Einstein metric on $X$ with Einstein factor $e_g$. 

If $e_g\,=\,0$, then $X$ is Calabi--Yau. Up to a finite unramified covering we can assume that the 
canonical bundle $K_X$ is trivial \cite{Bo1,Be}, and hence $X$ admits a holomorphic 
$\text{SL}_2({\mathbb C})$--structure. Now Corollary \ref{sl2}
implies that $X$ is covered by a compact complex torus torus.

Assume now $e_g \neq 0$.

For any integers $m,\, l$, the Hermitian structure $H$ on $(TX)^{\otimes m}\otimes 
((TX)^*)^{\otimes l}$ induced by $g$ also satisfies the Hermitian--Einstein condition. The 
Einstein factor $e_H$ for the Hermitian--Einstein metric $H$
is $(m-l)e_g$. In particular, $e_H\,=\, 0$ when $m\,=\,l$.
Consequently, any holomorphic section $\psi$ of $(TX)^{\otimes m}\otimes 
((TX)^*)^{\otimes m}$ is flat with respect to the Chern connection on $(TX)^{\otimes m}\otimes 
((TX)^*)^{\otimes m}$ corresponding to the Hermitian--Einstein structure $H$ \cite[p. 50, 
Theorem 2.2.1]{LT}.

Fix a $\text{GL}_2({\mathbb C})$--structure on $X$. It produces a holomorphic reduction of 
the structure group of the frame bundle $R(X)$ defined by a $\text{GL}_2({\mathbb C})$ 
principal subbundle denoted by $R'(X)$; recall that the homomorphism $\text{GL}_2({\mathbb 
C}) \,\longrightarrow\, \text{GL}_n({\mathbb C})$ is given by the $(n-1)$--th symmetric 
product of the standard representation of $\text{GL}_2(\mathbb C)$. This holomorphic 
reduction $R'(X)$ induces a holomorphic reduction, to $\text{GL}_2(\mathbb C)$, of the 
structure group of $(TX)^*$, and hence we get a holomorphic reduction, to 
$\text{GL}_2(\mathbb C)$, of the structure group of $\text{End}(TX)\,=\, TX \otimes (TX)^*$.

Since the above holomorphic reduction of the structure group of $TX \otimes (TX)^*\,=\, \text{End}(TX)$ to
$\text{GL}_2(\mathbb C)$ is induced by a holomorphic reduction of the structure group of
$(TX)^*$ to $\text{GL}_2(\mathbb C)$, we conclude that this holomorphic reduction of the structure
group of $TX \otimes (TX)^*$ to $\text{GL}_2(\mathbb C)$ actually produces a reduction of
the structure group of $TX \otimes (TX)^*$ to the quotient group
$\text{PGL}_2(\mathbb C)$ of $\text{GL}_2(\mathbb C)$. Note that this is equivalent to the
statement that the action of $\text{GL}_2(\mathbb C)$ on $S^{n-1}({\mathbb C}^2)$ factors
through the quotient group $\text{PGL}_2(\mathbb C)$ of $\text{GL}_2(\mathbb C)$.

The group $\text{PGL}_2(\mathbb C)$ does not admit any nontrivial character.
Consequently, the above reduction of structure group of $TX \otimes (TX)^*$ to $\text{PGL}_2(\mathbb C)$
is given by a finite set of  holomorphic tensors $\psi_i$ (as in the 
proof of Theorem \ref{kahler even} this is a consequence of Chevalley's theorem and of the fact that the structure group $\text{PGL}_2(\mathbb C)$  does not admit any nontrivial character). As noted 
above, all the tensors $\psi_i$ are  parallel with respect to the Chern connection on $TX\otimes (TX)^*$ 
associated to the Hermitian--Einstein metric $H$. It should be clarified that we chose to work with
$\text{End}(TX)$ instead of $TX$, because had  we worked with $TX$, we would have obtained from Chevalley's theorem  a
holomorphic line bundle instead of the holomorphic tensors $\psi_i$. In that case, the above criterion for deciding
flatness won't be applicable.

Let $\text{ad}(R(X))$ and $\text{ad}(R'(X))$ be the adjoint vector bundles for the principal
bundles $R(X)$ and $R'(X)$ respectively.

The above observation that  all  tensors $\psi_i$ are  parallel, with respect to the Chern connection on $TX\otimes (TX)^*$
associated to the Hermitian--Einstein metric $H$, does not imply that the reduction $R'(X)$ is preserved
by the Hermitian--Einstein connection. But it does imply that $\text{ad}(R'(X))$ is
preserved by the connection on $\text{End}(TX)$ induced by the Hermitian--Einstein connection. We shall give
below an alternative direct argument for it.

Note that $\text{ad}(R(X))\,=\, \text{End}(TX)\,=\,
TX\otimes (TX)^*$, and the Lie algebra structure of the fibers of $\text{ad}(R(X))$ is the
Lie algebra structure of the fibers of $\text{End}(TX)$ given by the usual Lie bracket $(A,\, B)\, \longmapsto\, AB-BA$.
We have a holomorphic inclusion of Lie algebra bundles
$$
\text{ad}(R'(X))\, \hookrightarrow\, \text{ad}(R(X)) \,=\, TX\otimes (TX)^*
$$
induced by the above holomorphic reduction of structure group $R'(X)\, \subset\, R(X)$.
As noted before, the K\"ahler--Einstein structure $g$ produces a Hermitian--Einstein structure
$H$ on $ TX\otimes (TX)^*$. In particular, the vector bundle $\text{ad}(R(X))$ is polystable of degree zero.

On the other hand, we have $\text{degree}(\text{ad}(R'(X)))\,=\, 0$. Indeed, any
$\text{GL}_2(\mathbb C)$--invariant nondegenerate symmetric bilinear form on the Lie algebra
$M(2,{\mathbb C})$ of $\text{GL}_2(\mathbb C)$ (for example, $(A,\, B)\, \longmapsto\, \text{Tr}(AB)$) produces
a fiberwise nondegenerate symmetric bilinear form on
$\text{ad}(R'(X))$, which in turn holomorphically identifies $\text{ad}(R'(X))$ with $\text{ad}(R'(X))^*$. 

Since $\text{ad}(R'(X))$ is a subbundle of degree zero of the polystable vector bundle 
$TX\otimes (TX)^*$ of degree zero, the Hermitian--Einstein connection on $TX\otimes (TX)^*$ 
given by $H$ preserves this subbundle $\text{ad}(R'(X))$. From this it can be deduced that 
the Levi--Civita connection on $TX$ corresponding to the K\"ahler--Einstein metric $g$ 
induces a connection on the principal $\text{GL}_2({\mathbb C})$--bundle $R'(X)$. To see 
this we note the following general fact. Let
$A\, \subset\, B\, \subset\, C$ be Lie groups such that $A$ is normal in $C$. Let
$E_B$ be a principal $B$--bundle and $E_C\,=\, E_B(C)$ the principal $C$--bundle
obtained by extending the structure group of $E_B$. Since $A$ is normal in $C$, the
quotient $E_B/A$ (respectively, $E_C/A$) is a principal $B/A$--bundle (respectively,
$C/A$--bundle). Let $\widetilde\nabla$ be a connection on $E_C$ such that the connection
on the principal $C/A$--bundle $E_C/A$ induced by $\widetilde\nabla$ preserves
the subbundle $E_B/A$. Then $\widetilde\nabla$ preserves the subbundle $E_B\, \subset\, E_C$.
In our situation, $C\,=\, \text{GL}(n,{\mathbb C})$ and $A$ is its center, while $B$
is the image of $\text{GL}(2,{\mathbb C})$.

The above observation, that the Levi--Civita connection for $g$ induces a connection on  the principal 
$\text{GL}_2({\mathbb C})$--bundle $R'(X)$, implies that the holonomy group of $g$ lies in the subgroup
$$
\text{GL}_2({\mathbb C})\cap \text{U}(n)\, =\, \text{U}(2)\, \subset\, \text{U}(n)\, .
$$
Using de Rham local Riemannian decomposition (see Proposition 2.9 in \cite{GGK} for a proof adapted to the 
K\"ahler case), and Berger's list of  groups  (see Proposition 3.4.1 in \cite[p.~55]{Jo}) of irreducible 
holonomies, we conclude that this holonomy, namely a subgroup of $\text{U}(2)$, appears only for
locally symmetric Hermitian spaces. The classification of the holonomies of locally symmetric Hermitian
spaces shows that $X$ is
\begin{itemize}
\item either biholomorphic to the Hermitian symmetric space ${\rm SO}(5, \mathbb R)/({\rm SO}(2, \mathbb R)
\times{\rm SO}(3, \mathbb R))$ \cite[p.~312]{Bes} (this is the case \textbf{BD I} in the list) (recall that ${\rm SO}(5, \mathbb R)/({\rm SO}(2, \mathbb R) \times {\rm SO}(3, \mathbb R))$ is
biholomorphic to the quadric $Q_3$);

\item or $X$ is covered by the bounded domain which is the noncompact dual (recall that
the noncompact dual of $Q_3$ is $D_3\,=\, {\rm SO}_0(3,2)/
({\rm SO}(2, \mathbb R) \times{\rm SO}(3, \mathbb R))$.
\end{itemize}
The holonomy group of the K\"ahler-Einstein metric of the quadric $Q_3$ is ${\rm SO}(2, 
\mathbb R) \times{\rm SO}(3, \mathbb R)$ \cite[p.~312]{Bes} (case \textbf{BD I} in the list). Here ${\rm SO}(2, 
\mathbb R)$ acts on $\mathbb C^3$ by the one parameter group $\text{exp}(t J)$ with $J$ being the 
complex structure, while the action of ${\rm SO}(3, \mathbb R)$ on ${\mathbb C}^3$ is given by the 
complexification of the standard action of ${\rm SO}(3, \mathbb R)$ on ${\mathbb R}^3$. Consequently, 
the action of the covering ${\rm U}(2)$ of ${\rm SO}(2, \mathbb R) \times{\rm SO}(3, \mathbb R)$
on ${\mathbb C}^3$ coincides with the action on the second symmetric power $S^2({\mathbb C}^2)$ of
the standard representation. 

The $\text{GL}_2({\mathbb C})$--structure on the quadric $Q_3$ must be flat \cite{KO, Kl, HM,Ye}.
Since the quadric is simply connected this flat $\text{GL}_2({\mathbb C})$--structure coincides with the 
standard one \cite{Oc,HM,Ye}.

Also, the only holomorphic conformal structure on any compact manifold covered by the noncompact 
dual $D_3$ of the quadric is the standard one \cite{Kl,Ye}.
\end{proof} 

The next result deals with Fano manifolds. Recall that a Fano manifold is a compact complex projective 
manifold such that the anticanonical line bundle $K_X^{-1}$ is ample. Fano manifolds are known to be rationally 
connected \cite{Ca2}, \cite{KMM}, and they are simply connected \cite{Ca1}.

A basic invariant of a Fano manifold is its {\it index}, which is, by definition, the 
maximal positive integer $l$ such that the canonical line bundle $K_X$ is divisible by $l$ in the Picard group of $X$.

\begin{theorem} \label{Fano}
Let $X$ be a Fano manifold, of complex dimension $n \,\geq\, 3$, that admits a holomorphic 
$\text{GL}_2({\mathbb C})$--structure. Then $n\,=\,3$, and $X$ is biholomorphic to the quadric $Q_3$ (the 
${\rm GL}_2({\mathbb C})$--structure being the standard one).
\end{theorem}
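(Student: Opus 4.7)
The plan is to convert the $\text{GL}_2(\mathbb{C})$-structure into a divisibility statement for $K_X^{-1}$ in $\mathrm{Pic}(X)$ and then invoke the Kobayashi--Ochiai theorem on the Fano index.

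First I would set $M\,:=\,\bigwedge^{2}E$. A direct weight computation (or the splitting principle) gives $\det S^{n-1}(E)\,\cong\, M^{\otimes n(n-1)/2}$, so from the isomorphism $TX\,\simeq\, S^{n-1}(E)$ of Definition \ref{def} one deduces
\[
K_X^{-1}\,\cong\, M^{\otimes\, n(n-1)/2}\qquad \text{in } \mathrm{Pic}(X)\,.
\]
Since $X$ is Fano, $K_X^{-1}$ is ample, hence so is $M$, and the Fano index $r(X)$ of $X$ satisfies $r(X)\,\ge\, n(n-1)/2$.

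Next I would appeal to the Kobayashi--Ochiai bound on Fano indices: for any Fano manifold of complex dimension $n$, one has $r(X)\,\le\, n+1$ with equality only for $X\,=\,\mathbb{P}^{n}$, and $r(X)\,=\,n$ only for $X$ a smooth hyperquadric $Q_n$. Combined with $r(X)\,\ge\, n(n-1)/2$ this forces $n(n-1)/2\,\le\, n+1$, hence $n\,\le\, 3$; since by hypothesis $n\,\ge\, 3$, we conclude $n\,=\,3$ and $r(X)\,\ge\, 3$. Thus $X$ is biholomorphic either to $\mathbb{P}^{3}$ or to $Q_{3}$. The option $X\,=\,\mathbb{P}^{3}$ is excluded by a direct Chern class computation: for any rank two holomorphic vector bundle $E$ on $\mathbb{P}^{3}$ one has $c_{1}(S^{2}E)\,=\,3\,c_{1}(E)\,\in\,3\mathbb{Z}\cdot H$, while $c_{1}(T\mathbb{P}^{3})\,=\,4H$, where $H$ generates $\mathrm{Pic}(\mathbb{P}^{3})$. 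Hence $T\mathbb{P}^{3}$ is not the second symmetric power of any rank two bundle, so $X\,\simeq\, Q_{3}$.

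To identify the $\text{GL}_2(\mathbb{C})$-structure as the standard one, I would invoke Theorem \ref{KE}: $Q_3$ is a Fano K\"ahler--Einstein threefold with positive Einstein constant, so it is neither finitely covered by a compact complex torus (which has trivial canonical class) nor by the Lie ball $D_3$ (whose compact quotients have ample canonical class). The trichotomy of Theorem \ref{KE} therefore forces alternative (2), namely the standard $\text{GL}_2(\mathbb{C})$-structure on $Q_3$. The main conceptual step is the numerical bridge $K_X^{-1}\,\cong\, M^{n(n-1)/2}$; once that is in place the result reduces to a clean application of the Kobayashi--Ochiai index bound together with the observation that $4$ is not a multiple of $3$.
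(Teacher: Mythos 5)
Your proof is correct and follows essentially the same route as the paper: compute $K_X\,=\,(\bigwedge^2E^*)^{n(n-1)/2}$, bound the Fano index below by $n(n-1)/2$, and invoke the Kobayashi--Ochiai characterization of $\mathbb{P}^n$ and $Q_n$ by their indices. Your explicit exclusion of $\mathbb{P}^3$ via the divisibility of $c_1$ by $3$ spells out a step the paper leaves implicit, and routing the final flatness claim through Theorem \ref{KE} (rather than citing the rigidity results of Hwang--Mok, Kobayashi--Ochiai and Ochiai directly, as the paper does) is a harmless variation since that theorem is proved independently.
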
 

\begin{proof}
Let
$$TX \,\stackrel{\sim}{\longrightarrow}\, S^{n-1}(E)$$
be a holomorphic $\text{GL}_2({\mathbb C})$--structure on $X$, where $E$ is a holomorphic vector bundle
of rank two on $X$. A direct computation shows that
$$K_X\,= \, (\bigwedge\nolimits^2 E^*)^{\frac{n(n-1)}{2}}\, .$$
Hence the index of $X$ is at least $\frac{n(n-1)}{2}$.
It is a known fact that the index of a Fano manifold $Y$ of complex dimension $n$ is at most $n+1$. Moreover, the 
index is maximal ($=\, n+1$) if and only if $Y$ is biholomorphic to the projective space 
${\mathbb C}{\mathbb P}^n$, and the index equals $n$ if and only if $Y$ is biholomorphic to the quadric 
\cite{KO1}. These imply that
$n\,=\,3$ and $X$ is biholomorphic to the quadric $Q_3$. As in the proof of Theorem 
\ref{KE}, the results in \cite{HM,KO,Oc} imply that the only $\text{GL}_2({\mathbb C})$--structure on the quadric 
$Q_3$ is the standard one.
\end{proof} 

\section{Related open questions}

In this section we collect some open questions on compact complex manifolds bearing a holomorphic 
$\text{GL}_2({\mathbb C})$--structure or a holomorphic $\text{SL}_2({\mathbb C})$--structure.

\textbf{$\text{SL}_2({\mathbb C})$--structure on Fujiki class $\mathcal C$ manifolds.}

We think that the statement of Theorem \ref{Fujiki} holds without the technical assumption 
described in Definition \ref{technical} (the existence of a cohomology class $\lbrack \alpha 
\rbrack\,\in\, H^{1,1}(X,\, \mathbb R)$ which is nef and has positive self-intersection).

In particular, we conjecture that {\it an odd dimensional compact complex manifold $X$ in Fujiki 
class $\mathcal C$ bearing a holomorphic ${\rm SL}_2({\mathbb C})$--structure is covered by a 
compact torus}. Notice that in this case $X$ admits the Levi--Civita holomorphic affine connection 
corresponding to the associated holomorphic Riemannian metric, which implies, as in the K\"ahler 
case, that all Chern classes of $X$ vanish \cite{At}.

\textbf{$\text{SL}_2({\mathbb C})$--structure on compact complex manifolds.}
 
Recall that Ghys manifolds, constructed in \cite{Gh} by deformation of quotients of 
$\text{SL}_2({\mathbb C})$ by normal lattices, admit non-flat locally homogeneous holomorphic 
Riemannian metrics (or equivalently, $\text{SL}_2({\mathbb C})$--structures). It was proved in 
\cite{Du} that all holomorphic Riemannian metrics on compact complex threefolds are locally 
homogeneous.
 
We conjecture that {\it ${\rm SL}_2({\mathbb C})$--structures on compact complex manifolds of odd 
dimension are always locally homogeneous.}

\textbf{$\text{GL}_2({\mathbb C})$--structures on compact K\"ahler manifolds of odd dimension.}
 
Recall that these manifolds admit a holomorphic conformal structure. Flat conformal structures on 
compact projective manifolds were classified in \cite{JR2}: beside some projective surfaces, there are only 
the standard examples.
 
As for the conclusion in Theorem \ref{KE} and in Theorem \ref{Fano}, we think that {\it K\"ahler 
manifolds of odd complex dimension $\geq 5$ and bearing a holomorphic $\text{GL}_2({\mathbb 
C})$--structure are covered by compact tori.}

\section*{Acknowledgements}

The authors would like to thank Charles Boubel who kindly explained the geometric construction of the noncompact 
dual $D_3$ of the quadric $Q_3$ presented in Section \ref{contexte}.

The authors wish to thank the referee for very careful reading of the manuscript and helpful comments.

This work has been supported by the French government through the UCAJEDI Investments in the 
Future project managed by the National Research Agency (ANR) with the reference number 
ANR2152IDEX201. The first-named author is partially supported by a J. C. Bose Fellowship, and 
school of mathematics, TIFR, is supported by 12-R$\&$D-TFR-5.01-0500. The second-named author wishes to thank TIFR Mumbai, ICTS Bangalore and IISc Bangalore for hospitality.


\end{document}